\documentclass[a4paper,11pt]{article}
\usepackage[utf8]{inputenc}
\usepackage[T1]{fontenc}
\usepackage[english]{babel}
\usepackage{mathpazo}
\usepackage{amsmath,amsthm,amssymb,mathtools,graphicx}
\usepackage[top=2.5cm,bottom=2.5cm,left=2cm,right=2cm]{geometry}
\usepackage[unicode,colorlinks=true,linkcolor=blue,citecolor=red,
urlcolor=blue,pdfencoding=auto,psdextra]{hyperref}
\newtheorem{theorem}{Theorem}[section]
\newtheorem{proposition}[theorem]{Proposition}
\newtheorem{lemma}[theorem]{Lemma}

\theoremstyle{remark}
\newtheorem{remark}[theorem]{Remark}
\newcommand{\E}{\mathbb E}
\renewcommand{\P}{\mathbb P}
\newcommand{\PF}{\mathcal P}
\newcommand{\TV}{d_{\mathrm{TV}}}
\newcommand{\KL}{\mathsf D}

\title{A total variation threshold for the first coordinates of uniform parking functions via Lorentzian polynomials}
\author{Igor Kortchemski\thanks{CNRS \& DMA, \'Ecole normale
sup\'erieure, PSL University,  75005 Paris, France,
\texttt{igor.kortchemski@math.cnrs.fr}}}
\date{}
\hypersetup{pdftitle={A total variation threshold for the first coordinates of uniform parking functions via Lorentzian polynomials},
pdfauthor={Igor Kortchemski}}

\newcommand{\aiuse}[3]{%
  \par\addvspace{0.6\baselineskip}%
  \noindent\textit{#1}\nobreak\hspace{0.5em}%
  \mbox{(#2)}\par\nobreak
  \noindent #3\par
}

\begin{document}
\maketitle

\begin{abstract}
Let $\pi_n$ be a uniform parking function of size $n$. We determine
the scale at which its first $k$ coordinates cease to be close in
total variation to $k$ independent uniform variables on $\{1,\ldots,n\}$.
The distance is comparable to
$1\wedge \sqrt{k\log(k+1)/n}$, uniformly for $n\ge2$ and $1\le k\le n$,
and tends to one above the scale $n/\log n$. At the critical scale
$k\sim\lambda n/\log n$, it converges to
$2\Phi(\sqrt{\lambda/\pi})-1$, where $\Phi$ is the standard normal
distribution function. A key ingredient is the Lorentzian property of the occupation
generating polynomial of parking functions.
Together with a strengthened Han inequality, this yields
a relative entropy estimate for approximation by the product
of the one-coordinate marginals.
\end{abstract}

\section{Introduction}

A uniform parking function of size $n$ has the law of $n$ independent
uniform variables on $\{1,\ldots,n\}$, conditioned to satisfy constraints
on their empirical distribution. The main purpose of this paper is to
determine how many coordinates $k_n$ must be observed before this
conditioning becomes detectable in total variation as $n\to\infty$.
We show in particular that the total variation threshold is $n/\log n$,
whereas Kolmogorov convergence holds whenever $k_n=o(n)$.

A \emph{parking function}
of size $n$ is a vector $x\in[n]^n$ satisfying, for every $1 \leq j \leq n$,
$$
 \#\{i\in[n]:x_i\le j\}\ge j,
$$
where for $n\ge1$ we write $[n]=\{1,\ldots,n\}$. 
Equivalently, its nondecreasing rearrangement satisfies $x_{(j)}\le j$ for every $1 \leq j \leq n$.
Let $\PF_n$ be the set of parking functions; its cardinality is
$(n+1)^{n-1}$ \cite{KW66}. Let $\pi_n$ be uniform on $\PF_n$,
denote by $U_n$ the uniform law on $[n]$, and put
$$
 P_{n,k}=\mathcal L(\pi_n(1),\ldots,\pi_n(k)),\qquad
 Q_{n,k}=U_n^{\otimes k}.
$$

Diaconis and Hicks~\cite{DH17} proved total variation convergence
for one coordinate and studied asymptotic independence of several
coordinates. Bellin \cite{Bel23} proved total variation convergence to independent uniform coordinates when $k_n=o(\sqrt n)$, with bound $O(k_n/\sqrt n)$, and established Kolmogorov convergence when $k_n=o(n)$. The remark following his
Theorem~1 asks whether $k_n=o(n)$ also suffices for total variation.
We answer this question negatively and determine the threshold,
with uniform estimates for all sample sizes.

\begin{theorem}
\label{thm:main}
There exists an absolute constant $C\ge1$ such that, for every
$n\ge2$ and $1\le k\le n$,
\begin{equation}
\label{eq:main}
 \sqrt{\frac{k\log(k+1)}{Cn+k\log(k+1)}}
 \le \TV(P_{n,k},Q_{n,k})
 \le 1 \wedge C\sqrt{\frac{k\log(k+1)}n},
\end{equation}
where $ \TV(P,Q)=\frac12\sum_x|P(x)-Q(x)|$ is the total variation distance.
\end{theorem}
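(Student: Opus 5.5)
The argument has two halves, an entropy upper bound and a test-statistic lower bound, and the two must match at the scale $k\log k/n$.

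\textbf{Upper bound.} Write $\mu_n$ for the one-coordinate marginal of $\pi_n$. By the triangle inequality,
$$\TV(P_{n,k},Q_{n,k})\le \TV(P_{n,k},\mu_n^{\otimes k})+\TV(\mu_n^{\otimes k},U_n^{\otimes k}).$$

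The second term is handled by an explicit calculation. Use the known formula for $\mu_n$ (Diaconis--Hicks): the density ratio $n\mu_n(j)$ deviates from $1$ by an amount of order $\sqrt{(n-j)/n}\,$-type corrections, concentrated in the bulk. This gives $\KL(\mu_n\|U_n)\lesssim \log n/n$. I expect this is exactly where the logarithm comes from, through the $1/(j)$-type tail of $\mu_n/U_n-1$ near $j\approx n$. A refinement is needed to get $\log(k+1)$ rather than $\log n$. I would bound the chi-square or Hellinger distance of the product, discarding the atoms where $|n\mu_n(j)-1|$ exceeds about $\sqrt{n/k}$: those atoms are seen with small probability among $k$ samples, and the remaining truncated sum is of order $\log k/n$. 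Pinsker's inequality, or the Hellinger bound, then gives order $\sqrt{k\log(k+1)/n}$.

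The first term is where the Lorentzian machinery enters. The occupation generating polynomial $\sum_{x\in\PF_n}\prod_i t_{x_i}$ is Lorentzian, so the law of the occupation vector is ultra-log-concave in the Brändén--Huh sense. By symmetrization, $\pi_n$ is an exchangeable vector whose type distribution has this negative-dependence structure. I would combine this with a strengthened Han inequality of the form
$$\KL\bigl(P_{n,k}\,\|\,\mu_n^{\otimes k}\bigr)\le \frac{k}{n}\,\KL\bigl(P_{n,n}\,\|\,\mu_n^{\otimes n}\bigr)\cdot(\text{correction}),$$
or more likely a form bounding the total correlation of $k$ exchangeable coordinates by $k^2/n^2$ times that of $n$ coordinates. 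The total correlation of the full vector would be estimated through the log-concavity of its generating polynomial, which controls $\log$ of the partition function $(n+1)^{n-1}$ versus $n^n\prod$ of marginals. This should give $\KL(P_{n,k}\|\mu_n^{\otimes k})=O(k/n)$, and then Pinsker finishes. \emph{This is the main obstacle}: getting an $O(k/n)$ entropy bound (with no log) from Lorentzian properties and a sharpened Han inequality.

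\textbf{Lower bound.} Use the test statistic $S=\#\{i\le k: x_i\ge n-m\}$ for a threshold $m$, or better a likelihood-ratio-type statistic $T=\sum_{i\le k} f(x_i)$ with $f=n\mu_n-1$ truncated at level $\sqrt{n/k}$. Under $Q_{n,k}$ the mean is $0$ and the variance is $k\sum_j U_n(j)f(j)^2\approx k\log k/n$. Under $P_{n,k}$ the mean is about $k\,\E_{\mu_n}f\approx k\log k/n$, and the variance is comparable, with covariances controlled by the negative dependence (or by Bellin's coordinate bounds). The standard second-moment inequality
$$\TV\ge \frac{(\Delta\text{mean})^2}{(\Delta\text{mean})^2+C(\mathrm{Var}_P+\mathrm{Var}_Q)}$$
(via Cauchy--Schwarz on a mixture) gives
$$\frac{(k\log k/n)^2}{(k\log k/n)^2+Ck\log k/n}=\frac{k\log k}{Cn+k\log k}.$$
To obtain the square root as stated, I would use the sharper Hellinger-type version $\TV\ge \Delta/\sqrt{\Delta^2+C\,\mathrm{Var}}$ with $\Delta=k\,\E f$ measured in standard deviations. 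For $k=1$ the bound holds trivially by Diaconis--Hicks one-coordinate non-uniformity.
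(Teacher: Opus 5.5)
Your upper bound follows the paper's outline: the triangle inequality through $\mu_n^{\otimes k}$, a Lorentzian occupation polynomial combined with a strengthened Han inequality, and an explicit product estimate. The lower bound, however, has a genuine gap when $b=k\log(k+1)/n$ is small, where the theorem asserts order $\sqrt b$.

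The Cauchy--Schwarz second-moment inequality gives only $\TV\gtrsim\Delta^2/(\Delta^2+\mathrm{Var})$, which with your statistic is of order $b$. The ``sharper Hellinger-type version'' $\TV\ge\Delta/\sqrt{\Delta^2+C\,\mathrm{Var}}$ is false in general. Take $T=0$ almost surely under $Q$, and under $P$ let $T=M\neq0$ with probability $\epsilon$ and $T=0$ otherwise. Then $\Delta/\sqrt{\Delta^2+C\,\mathrm{Var}_P(T)}\approx\sqrt{\epsilon/C}$, whereas $\TV(P,Q)=\epsilon$. Hellinger does not rescue this: it gives $H\gtrsim\Delta/\sqrt{\mathrm{Var}}$, but in that direction only $\TV\ge H^2$ holds. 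A $\sqrt b$ lower bound needs Gaussian-type anti-concentration for the statistic, and your argument does not supply this under the non-product law $P_{n,k}$. The paper instead proves the sharp two-sided estimate $\TV(\mu_n^{\otimes k},Q_{n,k})\asymp1\wedge\sqrt b$ for product measures (Lemma~\ref{lem:products}). It then transfers this to $P_{n,k}$ using $\TV(P_{n,k},\mu_n^{\otimes k})\le\frac kn\sqrt{\log n}\le Cb\sqrt{\log(e/b)}=o(\sqrt b)$, so the small-$b$ lower bound also depends on decoupling. For $b\ge b_0$ a bound of the form $b/(C+b)$ is enough, but it requires $\mathrm{Var}_{P_{n,k}}(T)\le k\,\mathrm{Var}_{\mu_n}(f)$. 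This follows by expanding~\eqref{eq:product_moments} with $z_j=e^{tf(j)}$ to second order at $t=0$. The paper uses the same inequality, via an AM--GM (Bhattacharyya) bound, to obtain~\eqref{eq:separation}.

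The second gap is the step you call the main obstacle, which is the heart of the proof. You need $D_m/(m(m-1))$ to be nondecreasing in $m$, where $D_m=\KL(\mathcal L(\pi_n(1),\ldots,\pi_n(m))\Vert\mu_n^{\otimes m})$. This comes from the strengthened Han inequality of Lee and Seo, $\sum_{j=1}^m\mathsf H(Z_1,\ldots,Z_{j-1},Z_{j+1},\ldots,Z_m)\ge(m-2)\mathsf H(Z_1,\ldots,Z_m)+\sum_{j=1}^m\mathsf H(Z_j)$. It holds for exchangeable laws with Lorentzian occupation polynomial, a property inherited by initial segments. Exchangeability turns it into $mD_{m-1}\le(m-2)D_m$ (Lemma~\ref{lem:sample_entropy}). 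Ordinary Han only gives $D_k\le\frac knD_n$. A direct computation, with no log-concavity needed, gives $D_n=\log\frac{n^n}{(n+1)^{n-1}}-n\KL(\mu_n\Vert U_n)=(1-\frac2\pi)\log n+O(1)$. So ordinary Han yields only $\TV(P_{n,k},\mu_n^{\otimes k})\lesssim\sqrt{k\log n/n}$. That is too weak for the upper bound when $\log k\ll\log n$, and never $o(\sqrt b)$ as the lower-bound transfer requires.

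Your product estimate also needs repair. Since $|g_n|\le1$, truncating at $\sqrt{n/k}$ is vacuous. The deviations sit at the endpoints ($g_n(j)\approx\sqrt{2/\pi}\,j^{-1/2}$ near $1$, reflected near $n$), not in the bulk. Discarding endpoint atoms because they are rarely hit does not yield $\log(k+1)$. Keeping the hitting probability below $\sqrt b$ forces $m\lesssim\sqrt{n\log(k+1)/k}$, and the retained $\chi^2$ is then still of order $\log n/n$. Instead, truncate where $|g_n|\approx\sqrt b$, i.e.\ at $m\approx n/(k\log(k+1))$, and charge the discarded atoms in $\ell^1$. The cost is of order $k\sqrt m/n\asymp\sqrt b/\log(k+1)$. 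The paper bypasses this with the product-measure characterization used in Lemma~\ref{lem:products}.
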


In particular, $\TV(P_{n,k_n},Q_{n,k_n})\to0$ if and only if
$k_n=o(n/\log n)$, whereas the distance tends to one if
$k_n\log n/n\to\infty$.  Thus Kolmogorov convergence may coexist with asymptotic separation
in total variation. The next theorem identifies the transition
between these regimes. Let $\Phi$ be the standard normal distribution
function and set
$$
 F(\lambda)=2\Phi\!\left(\sqrt{\lambda/\pi}\right)-1
 \quad(0\le\lambda<\infty),\qquad F(\infty)=1.
$$

\begin{theorem}
\label{thm:profile}
For every integer sequence $1\le k_n\le n$, if
$k_n\log n/n\to\lambda\in[0,\infty]$, then
$$
 \TV(P_{n,k_n},Q_{n,k_n})\longrightarrow F(\lambda).
$$
\end{theorem}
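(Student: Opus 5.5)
The plan is to separate the dependence between coordinates from the distortion of each single coordinate, and to compute the limit exactly for the product law. Let $M_n$ be the law of $\pi_n(1)$; by exchangeability every coordinate has law $M_n$. The cases $\lambda=0$ and $\lambda=\infty$ follow directly from \eqref{eq:main}. For $\lambda=0$ the upper bound tends to $0$, since $k_n\log(k_n+1)/n\to0$. For $\lambda=\infty$ the lower bound tends to $1$, because $k_n\log(k_n+1)\ge c\,k_n\log n$ once $k_n\ge\sqrt n$; if $k_n<\sqrt n$ then $k_n\log n/n\to0$, so this case cannot occur. So I assume $0<\lambda<\infty$, i.e.\ $k_n\sim\lambda n/\log n$, and write
$$
\TV(P_{n,k},Q_{n,k}) = \TV(M_n^{\otimes k},Q_{n,k}) + O\bigl(\TV(P_{n,k},M_n^{\otimes k})\bigr).
$$

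\emph{Step 1 (dependence is negligible).} I use the relative entropy estimate announced in the abstract. It comes from the Lorentzian property of the occupation generating polynomial combined with the strengthened Han inequality, and it bounds $\KL(P_{n,k}\,\|\,M_n^{\otimes k})$. I expect this bound to be of order $k^2/n^2$, or at least $o(1)$, whenever $k=O(n/\log n)$. Pinsker's inequality then gives $\TV(P_{n,k},M_n^{\otimes k})\to0$. This is the one step that really uses the machinery of the paper. I take it as the earlier estimate and only check that its rate is $o(1)$ at $k\asymp n/\log n$.

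\emph{Step 2 (one-coordinate asymptotics).} From the exact formula for $M_n$, which follows from Pollak's circular argument or from Diaconis--Hicks, write $nM_n(n-m)=1+\varepsilon_n(m)$ for $0\le m<n$. The target asymptotics are as follows. First, $\varepsilon_n(m)\approx -c/\sqrt m$ for $1\ll m\ll n$, with an explicit constant $c$. Second, $\varepsilon_n(m)$ is bounded below by $-1+\delta$ for bounded $m$. Third, $\sum_m \varepsilon_n(m)^2\mathbf 1_{m>T}/n=(c^2+o(1))\log(n/T)/n$ for every $1\ll T\ll n$. Consistency with $F$ forces $c^2=4/\pi$, and I would verify this from the Abel-type sums in the formula; that verification is routine but fiddly.

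\emph{Step 3 (CLT for the product law).} Under $Q_{n,k}$ the log-likelihood ratio is $L=\sum_{i=1}^k\log(1+\varepsilon_n(m_i))$ with $m_i$ i.i.d.\ uniform. Truncate at $T=n/(k\,\omega_n)$ with $\omega_n\to\infty$ slowly. The expected number of indices with $m_i\le T$ is $kT/n=1/\omega_n\to0$, so these indices can be discarded. On $\{m_i>T\}$ each summand is $O(\sqrt{k\omega_n/n})=o(1)$, and its variance is $(c^2+o(1))\log(n/T)/n$. Since $\log(n/T)=\log(k\omega_n)\sim\log n$, the total variance is
$$
\sigma_n^2\sim c^2\,\frac{k\log n}{n}\to c^2\lambda=\frac{4\lambda}{\pi}.
$$
A second-order expansion gives $\E L\approx-\sigma_n^2/2$. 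Lindeberg's condition holds because the individual summands are $o(1)$, so $L\Rightarrow\mathcal N(-\sigma^2/2,\sigma^2)$ under $Q$.

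\emph{Step 4 (conclusion and main obstacle).} Le Cam's first lemma gives contiguity, and under the alternative the law of $L$ is $\mathcal N(\sigma^2/2,\sigma^2)$. Since $\TV=\E_Q(1-e^L)^+$ and the laws of $L$ are uniformly integrable, the distance converges to the total variation between these two Gaussians, namely $2\Phi(\sigma/2)-1=2\Phi(\sqrt{\lambda/\pi})-1=F(\lambda)$. Apart from the entropy input of Step 1, I expect the main obstacle to be Step 2. There one must extract the precise constant $c$, and control $\varepsilon_n$ uniformly across the whole range of $m$, including $m$ comparable to $n$ and $m=O(1)$, so that the truncation and second-order expansion in Step 3 are justified.
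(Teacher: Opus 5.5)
Your architecture matches the paper's. You use the decoupling bound of Proposition~\ref{prop:decoupling}, whose actual form is $\frac{k(k-1)}{n(n-1)}\log n\asymp1/\log n$ at the critical scale rather than $k^2/n^2$, but which is still $o(1)$. You then prove a Gaussian limit for the product log likelihood. Your treatment of $\lambda=0$ and $\lambda=\infty$ via \eqref{eq:main} is fine.

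The gap is Step~2, which is exactly where the constant of the profile is decided. You do not prove it; you infer $c^2=4/\pi$ from ``consistency with $F$'', and carrying out the promised verification would in fact contradict your picture. The deviation $g_n(j)=n\mu_n(j)-1$ is singular at \emph{both} ends. Near $j=1$ it is positive, with $g_n(1)=(n-1)/(n+1)$ and $g_n(j)\approx\sqrt{2/\pi}\,j^{-1/2}$. The reflection identity $g_n(j)+g_n(n+2-j)=-2/(n+1)$ then gives $g_n(n+2-j)\approx-\sqrt{2/\pi}\,j^{-1/2}$ near the right end. So in your notation $\varepsilon_n(m)\approx-\sqrt{2/\pi}\,m^{-1/2}$ for $1\ll m\ll n$, which means $c^2=2/\pi$, not $4/\pi$. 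The other half of $\sum_jg_n(j)^2=\frac4\pi\log n+O(1)$ comes from $m$ close to $n$, where $\varepsilon_n(m)$ is positive and of order one. This has three consequences:
\begin{itemize}
\item Your third claim is false as stated. In fact $\sum_{m>T}\varepsilon_n(m)^2=\frac2\pi\log n+\frac2\pi\log(n/T)+O(1)$; take $T=n/\log n$ to see that this is not $\sim c^2\log(n/T)$ for any $c$.
\item In Step~3, the assertion that every summand on $\{m_i>T\}$ is $O(\sqrt{k\omega_n/n})$ fails. The summand at the spot $1$ is $\log(1+g_n(1))\to\log2$.
\item Your variance $4\lambda/\pi$ comes out right only because $\log(n/T)\sim\log n$ for your choice of $T$, so the two mistakes cancel.
\end{itemize}

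To repair this, use the two-sided estimates of Lemma~\ref{lem:marginal}: $e^{-1}\le1+g_n(j)\le2$, $|g_n(j)|\le C/\sqrt{j\wedge(n+1-j)}$, and \eqref{eq:g_moments}. You then have two options.
\begin{itemize}
\item Truncate at both ends. The expected number of discarded indices is $2kT/n\to0$, the remaining summands are $o(1)$, and the variance is $\frac4\pi\,k\log(n/T)/n\to4\lambda/\pi$.
\item Drop the truncation entirely. Since $|\log(1+g_n)|\le e|g_n|$, the bound $\sum_j|g_n(j)|^3\le C$ gives Lyapunov's condition $k\cdot\frac1n\sum_j|\log(1+g_n(j))|^3\le Ck/n\to0$. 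The paper does the equivalent: it expands $\E[e^{iuL}]=\bigl(\frac1n\sum_j(1+g_n(j))^{iu}\bigr)^{k_n}$ to second order using $\sum_jg_n(j)=0$.
\end{itemize}
Finally, Le Cam's lemmas and uniform integrability are unnecessary in Step~4. Since $x\mapsto(1-e^x)_+$ is bounded and continuous, convergence in law of $L$ under $Q_{n,k_n}$ alone gives the limit $\E[(1-e^{\sigma G-\sigma^2/2})_+]=2\Phi(\sigma/2)-1$.
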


\begin{figure}[!t]
\centering
\includegraphics[scale=0.5]{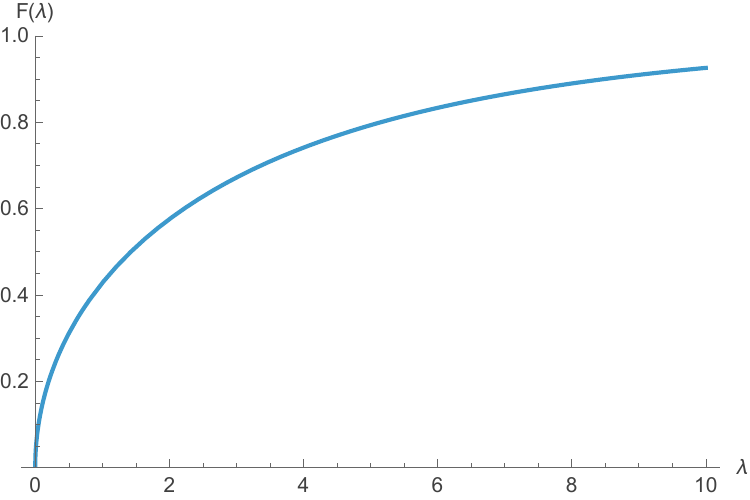}
\caption{The limiting profile
$F(\lambda)=2\Phi(\sqrt{\lambda/\pi})-1$.}\label{fig:F}
\end{figure}

The main input is approximation by independent marginals.
Set $\mu_n=\mathcal L(\pi_n(1))$ and write
$\KL(P\Vert Q)=\sum_xP(x)\log(P(x)/Q(x))$ for relative entropy,
with the usual conventions at zero and natural logarithms;
see~\cite[Chapters~2--4 and~7]{PW25}.

\begin{proposition}
\label{prop:decoupling}
For every $n\ge2$ and $1\le k\le n$,
\begin{equation}
\label{eq:decoupling}
 \KL(P_{n,k}\Vert\mu_n^{\otimes k})
 \le\frac{k(k-1)}{n(n-1)}\log n,
 \qquad
 \TV(P_{n,k},\mu_n^{\otimes k})
 \le\sqrt{\frac{k(k-1)\log n}{2n(n-1)}}.
\end{equation}
\end{proposition}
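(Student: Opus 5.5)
The plan is to reduce everything to entropies of the exchangeable vector $\pi_n$. Write $H_j$ for the Shannon entropy of $(\pi_n(1),\ldots,\pi_n(j))$, with $H_0=0$, and note that by exchangeability each coordinate has law $\mu_n$. Then
$$
 \KL(P_{n,k}\Vert\mu_n^{\otimes k}) = kH_1-H_k =: T_k,
$$
the total correlation of the first $k$ coordinates. The second inequality in \eqref{eq:decoupling} follows from the first by Pinsker's inequality $\TV\le\sqrt{\KL/2}$. So it suffices to bound $T_k$.

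\emph{Step 1: the endpoint $k=n$.} Since $\pi_n$ is uniform on $\PF_n$, we have $H_n=\log|\PF_n|=(n-1)\log(n+1)$. Also $H_1\le\log n$, because $\mu_n$ is supported on $[n]$. Hence
$$
 T_n\le n\log n-(n-1)\log(n+1)=\log n-(n-1)\log(1+1/n)\le\log n .
$$

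\emph{Step 2: interpolation.} It then suffices to prove the strengthened Han inequality
\begin{equation*}
 \frac{T_k}{k(k-1)}\ \text{is nondecreasing in } 2\le k\le n,
\end{equation*}
since this gives $T_k\le \frac{k(k-1)}{n(n-1)}T_n\le\frac{k(k-1)}{n(n-1)}\log n$; for $k=1$ we have $T_1=0$. Write $T_k=\sum_{j=0}^{k-1}d_j$ with
$$
 d_j=H_1-(H_{j+1}-H_j)=I\bigl(\pi_n(j+1);\pi_n(1),\ldots,\pi_n(j)\bigr)\ge0,\qquad d_0=0.
\]
Ordinary Han, i.e.\ submodularity, says only that $d_j$ is nondecreasing. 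That yields a bound linear in $k/n$, which is too weak. What is needed instead is that $j\mapsto d_j/j$ is nondecreasing, i.e.\ $d$ is star-shaped from the origin. Indeed, if $d_j\le (j/m)\,d_m$ for $j\le m$, then averaging shows that the mean of $d_j/j$ over $j<k$, weighted by $j$, is monotone in $k$. That weighted mean equals $2T_k/(k(k-1))$, which gives the claimed monotonicity. A convenient sufficient condition is convexity of $j\mapsto d_j$, i.e.\ $H_{j+1}-2H_j+H_{j-1}$ being nonincreasing in $j$, together with $d_0=0$.

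\emph{Step 3: the main obstacle, proving the star-shapedness via the Lorentzian property.} Here I would use the occupation generating polynomial $\sum_{x\in\PF_n}\prod_i z_{x_i}$, viewed as a polynomial in the occupation variables $z_1,\ldots,z_n$. Its Lorentzian property, taken as the paper's key input, makes the occupation-count vector of $\pi_n$ strongly log-concave and negatively dependent. This persists under conditioning on the first $j$ coordinates, since conditioning corresponds to differentiating the polynomial, and Lorentzian polynomials are stable under differentiation. The first thing I would try is to express $d_{j+1}-d_j$ as an expected conditional mutual information between two fresh coordinates given the first $j$. Lorentzian log-concavity should then show that this "pairwise interaction" grows with the amount of information already conditioned on, yielding the convexity of $d$ in Step 2. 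If exact convexity fails, the fallback is to prove directly $d_j\le \frac{j}{n-1}\,d_{n-1}$ by comparing the conditional law given $j$ revealed coordinates with the conditional law given $n-1$ of them, using ultra-log-concavity of the conditional occupation counts. I expect this step to carry essentially all the difficulty; Steps 1 and 2 and Pinsker are routine.
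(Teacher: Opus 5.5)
Your Steps 1 and 2 and the appeal to Pinsker are correct, and they are exactly the skeleton of the paper's argument: $T_n=nH_1-H_n\le\log n$, and everything reduces to the monotonicity of $T_k/(k(k-1))$. The gap is Step 3. It carries all the content and contains no argument: ``Lorentzian log-concavity should then show'' is a hope, not a proof. The missing ingredient is the strengthened Han inequality of Lee and Seo for exchangeable laws whose occupation polynomial is Lorentzian,
\[
 \sum_{j=1}^m\mathsf H(Z_1,\ldots,Z_{j-1},Z_{j+1},\ldots,Z_m)\ \ge\ (m-2)H_m+\sum_{j=1}^m\mathsf H(Z_j).
\]
It is applied to each initial segment $(Z_1,\ldots,Z_m)$ of $Z_i=\pi_n(i)$; marginalization preserves exchangeability and the Lorentzian property. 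By exchangeability it reads $mH_{m-1}\ge(m-2)H_m+mH_1$, i.e.\ $mT_{m-1}\le(m-2)T_m$, which is precisely the monotonicity you need. In your notation it says only $T_{m-1}\le\frac{m-2}{2}\,d_{m-1}$. This is weaker than the star-shapedness or convexity of $d$ that you set out to prove.

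Your convexity target does hold, but it also comes from this inequality, not from a softer log-concavity argument. For an exchangeable Lorentzian triple $(A,B,C)$, the case $m=3$ is equivalent to $I(A;B)\le I(B;C\mid A)$. Apply it to the conditional law of $(Z_{j+1},Z_{j+2},Z_{j+3})$ given $(Z_1,\ldots,Z_j)$; conditioning differentiates the occupation polynomial, so both properties persist. Averaging then yields $d_{j+1}-d_j=I(Z_{j+1};Z_{j+2}\mid Z_1,\ldots,Z_j)\le d_{j+2}-d_{j+1}$.

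Your fallback, on the other hand, could not close the argument even if proved. The bound $d_j\le\frac{j}{n-1}d_{n-1}$ only gives $T_k\le\frac{k(k-1)}{2(n-1)}d_{n-1}$. Star-shapedness itself forces $d_{n-1}\ge2T_n/n$, so this is never better than $\frac{k(k-1)}{n(n-1)}T_n$. Here it is much worse. Given the first $n-1$ coordinates, the last one is uniform on $\{1,\ldots,M\}$, with $n+1-M$ asymptotically Borel distributed (tail of order $\ell^{-1/2}$). Hence $d_{n-1}=\E[\log(n/M)]-\KL(\mu_n\Vert U_n)\asymp n^{-1/2}$, and the resulting bound exceeds the claimed one by a factor of order $\sqrt n/\log n$. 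The comparison must be anchored at the cumulative quantity $T_n$, not at the last increment.
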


If $n/\log n=o(k_n)$ and $k_n=o(n/\sqrt{\log n})$,
then $\TV(P_{n,k_n},\mu_n^{\otimes k_n})\to0$ while
$\TV(P_{n,k_n},Q_{n,k_n})\to1$.
At the critical scale, dependence is negligible: the profile is determined
by accumulated marginal bias.

\paragraph{Main steps of the proof.}The support theorem of Br\"and\'en and Huh~\cite{BH20} gives
the Lorentzian property of the occupation polynomial of parking functions.
The strengthened Han inequality of Lee and Seo~\cite{LS26} and
exchangeability then yield the quadratic entropy contraction factor
$k(k-1)/(n(n-1))$ (Lemma~\ref{lem:sample_entropy}), which makes
dependence negligible at the critical scale.  The logarithm reflects the inverse-square-root behavior of the marginal deviations near the endpoints. Indeed, Lemma~\ref{lem:marginal} gives
$$
 \frac1n\sum_{j=1}^n (n\mu_n(j)-1)^2
 =\frac4\pi\frac{\log n}{n}+O(n^{-1}).
$$
At the critical scale, a Gaussian limit for the product log likelihood
yields the profile. The uniform bounds use the product-measure comparison of~\cite{AKVZ26}.

\paragraph{Notation.} Throughout, $c,C$ denote positive absolute constants which may change
from line to line. We write $a\asymp b$ when $ca\le b\le Ca$ uniformly
in the parameters under consideration. For a vector $x$ with
nonnegative integer coordinates, write $|x|=\sum_i x_i$,
$x!=\prod_i x_i!$ and $z^x=\prod_i z_i^{x_i}$.

\paragraph{Acknowledgments.} The author proposed the question of whether $\TV(P_{n,k_n},Q_{n,k_n})\to0$  when $k_{n}=o(n)$ at two open problem workshops, the 2022 Bellairs Probability and Combinatorics workshop and the  2025 ANR Louccoum workshop, and thanks  Omer Angel, \'Etienne Bellin, Peter Keevash as well as Anne-Laure Basdevant, Vanessa Dan, Théo Lenoir for stimulating discussions and attempts to establish this convergence during these meetings.

\paragraph{AI use.} See Section \ref{sec:AI} for a detailed account.

\section{Marginals and occupation counts}
\label{sec:preliminaries}

We denote by $\P_{n}$ the uniform probability measure on $ \mathcal{P}_{n}$ and by $ \mathbb{E}_{n}$ the corresponding expectation. For $\omega \in \mathcal{P}_{n}$ and $1 \leq j \leq n$, set $X_j(\omega)=\#\{i\in[n]:\omega(i)=j\}$. The vector $(X_j)_{1\le j\le n}$ is the occupation vector. Since the law of $\pi_n$ is invariant under permutations
of its coordinates, we have $\E_n[X_j]=n\mu_n(j)$. 

Define
$$
 g_n(j)=n\mu_n(j)-1.
$$
Observe that $ g_n(j)$ represents the deviation of the mean occupation count from the uniform value.

\subsection{The one-coordinate marginal}

\begin{lemma}
\label{lem:marginal}
The sequence $g_n$ is nonincreasing. In addition, there are absolute constants $c,C,\varepsilon>0$ such that the following inequalities hold for every $n\ge2$.  For every $ 1 \leq j \leq n$:
\begin{equation}
\label{eq:g_bounds}
 e^{-1}\le1+g_n(j)\le2,\quad
 |g_n(j)|\le\frac{C}{\sqrt{j\wedge(n+1-j)}},
\end{equation}
and for every $1\le j\le 1 \vee \lfloor\varepsilon n\rfloor$ we have $ g_n(j)\ge{c}/{\sqrt j}$.

Moreover,
\begin{equation}
\label{eq:g_moments}
 \sum_{j=1}^n g_n(j)^2 =\frac4\pi\log n+\mathcal O(1),
 \qquad \sum_{j=1}^n|g_n(j)|^3\le C.
\end{equation}
\end{lemma}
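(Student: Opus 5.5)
The plan is to obtain an exact formula for $\mu_n(j)$, read off monotonicity from it, and then get every quantitative statement from Stirling asymptotics. For the formula I would use the classical first-coordinate decomposition of Diaconis and Hicks~\cite{DH17}, which I recall in the following form and would re-derive by the usual breakpoint/Abel-identity argument if necessary. There is a probability distribution $(p_n(s))_{0\le s\le n-1}$ on $\{0,\dots,n-1\}$, of Abel (quasi-binomial) type,
$$
 p_n(s)=\frac{1}{n}\binom{n-1}{s}\frac{(s+1)^{s-1}(n-s)^{n-s-1}}{(n+1)^{n-2}}\qquad(\text{normalisation to be checked by Abel's identity}),
$$
such that
$$
 n\mu_n(j)=\frac{n}{n+1}\Bigl(1+\sum_{s\ge j}p_n(s)\Bigr),\qquad\text{equivalently}\qquad 1+g_n(j)=\frac{n}{n+1}\bigl(1+\P(S_n\ge j)\bigr)\ \text{with } S_n\sim p_n .
$$
Combinatorially, $\P(S_n\ge j)$ is the chance that, after removing car $1$, the remaining cars leave the first $j-1$ spots suitably filled. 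Granting this formula, $g_n$ is a decreasing function of a tail probability, so it is nonincreasing. Moreover $1+g_n(j)\in[\tfrac{n}{n+1},\tfrac{2n}{n+1}]$, which gives $1+g_n\le2$ and $1+g_n\ge 2/3\ge e^{-1}$.

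For the pointwise bounds I would apply Stirling to $p_n(s)$. With $m=n-s$ this gives, uniformly,
$$
 p_n(s)\asymp \frac{n^{3/2}}{(s+1)^{3/2}\,m^{3/2}}\cdot\frac1{\sqrt n},
$$
and more precisely the local expansion $p_n(s)=\frac{1}{\sqrt{2\pi}}\frac{\sqrt n\,(1+O(1/(s+1)+1/m))}{(s+1)^{3/2}m^{3/2}}\cdot n$ up to the correct normalisation. Thus $S_n$ concentrates near both endpoints, with mass $\tfrac12+o(1)$ at each.

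Summing the tails then gives the two regimes. For $j\le n/2$, $\P(S_n\ge j)=1-\P(S_n<j)$, where $\P(S_n<j)=\tfrac12-\sqrt{2/(\pi j)}+O(j^{-3/2})$, so $g_n(j)=\sqrt{2/(\pi j)}+O(1/j)+O(1/n)$. Symmetrically, for $j>n/2$ one gets $g_n(j)=-\sqrt{2/(\pi(n+1-j))}+O(1/(n+1-j))$. The exact constant $\sqrt{2/\pi}$ only matters for the second moment; what matters for the other claims is the order $j^{-1/2}$ with an error $O(1/j)$. This yields $|g_n(j)|\le C/\sqrt{j\wedge(n+1-j)}$, and also $g_n(j)\ge c/\sqrt j$ for $j\le\varepsilon n$ once $\varepsilon$ is small enough that the $O(1/n)$ correction is dominated, which holds because $1/n\le\varepsilon/j$. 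Small $n$ are handled by adjusting constants, using the strict positivity $g_n(1)>0$.

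For the moments, I square the expansions and sum them. The left end contributes $\sum_{j\le n/2}\tfrac{2}{\pi j}+O(1)=\tfrac2\pi\log n+O(1)$, and the right end contributes the same, giving $\tfrac4\pi\log n+O(1)$. The cross terms are $O(\sum_j j^{-3/2})=O(1)$. The cubic sum is bounded by $C\sum_j j^{-3/2}\le C$ directly from the pointwise bound.

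The main obstacle is getting the error terms right, that is, an asymptotic for $\P(S_n<j)$ with an error $O(1/j)$ uniform in $j\le n/2$ and in $n$. Bounds that only hold up to constants give the first three claims but not the precise constant $\frac4\pi$ with an $O(1)$ remainder. I would handle this by writing the tail $\sum_{s<j}p_n(s)$ as $\tfrac12$ minus $\sum_{s\ge j,\ s\le n/2}p_n(s)$, after checking that the left half has mass $\tfrac12+O(n^{-1/2})$. I would then compare the latter sum with $\int_j^\infty\frac{dx}{\sqrt{2\pi}x^{3/2}}$ using the Stirling error $1+O(1/s)$ and $(1-s/n)^{-3/2}=1+O(s/n)$. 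The resulting $O(\sqrt j/n)$ error, after multiplication, still contributes only $O(1)$ to $\sum g_n^2$.
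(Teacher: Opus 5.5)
\textbf{There is a genuine gap: the exact formula you start from is false.} Your plan (exact formula, then Stirling, then summation) is the paper's plan. But the bound on $1+g_n$ and both asymptotic regimes are derived from this incorrect formula.

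No law on $\{0,\dots,n-1\}$ can satisfy $1+g_n(j)=\frac{n}{n+1}\bigl(1+\P(S_n\ge j)\bigr)$. It would force $1+g_n(j)\ge\frac{n}{n+1}$ for all $j$, with equality at $j=n$. But if $\pi_n(1)=n$, the other $n-1$ cars must form a parking function of size $n-1$, so
\[
1+g_n(n)=\frac{n\cdot n^{n-2}}{(n+1)^{n-1}}=\Bigl(\frac{n}{n+1}\Bigr)^{n-1}\to e^{-1}.
\]
Already for $n=3$ one has $(1+g_3(j))_{j=1,2,3}=(3/2,15/16,9/16)$, which would require $\P(S_3\ge3)=-1/4$.

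So your lower bound $1+g_n\ge n/(n+1)\ge 2/3$ is false. The constant $e^{-1}$ in the lemma is approached at $j=n$, and your argument never identifies that endpoint value.

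Your asymptotics do not follow from your formula either. Combined with your tail estimate for $\P(S_n<j)$, it would give $g_n(j)\approx\frac12+\sqrt{2/(\pi j)}$ at the left end. At the right end it excludes the negative values $-\sqrt{2/(\pi(n+1-j))}$ that you then assert.

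\textbf{The correct input.} What the paper takes from Diaconis--Hicks (Corollary~1) is
\[
 g_n(i)-g_n(i+1)=\frac{q_iq_{n+1-i}}{q_{n+1}}=\binom{n+1}{i}\frac{i^{i-1}(n+1-i)^{n-i}}{(n+1)^n}\quad(1\le i<n),\qquad 1+g_n(n)=\Bigl(\frac{n}{n+1}\Bigr)^{n-1},
\]
with $q_i=e^{-i}i^{i-1}/i!$. Equivalently, $1+g_n(j)=\frac{2n}{n+1}\P(T_n\ge j)$, where $T_n$ has weights proportional to $\binom{n+1}{i}i^{i-1}(n+1-i)^{n-i}$ on $\{1,\dots,n\}$. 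These weights are symmetric under $i\mapsto n+1-i$, and Abel's identity gives their total as $2n(n+1)^{n-1}$. The prefactor is $\frac{2n}{n+1}$ with no additive $1$; that is the structural difference from your formula.

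With this formula your plan goes through and becomes the paper's proof:
\begin{itemize}
\item monotonicity follows from positivity of the increments;
\item $e^{-1}\le 1+g_n\le 2$ follows from the two endpoint values $2n/(n+1)$ and $(n/(n+1))^{n-1}$;
\item the right half follows from the reflection identity $g_n(j)+g_n(n+2-j)=-2/(n+1)$, so no separate right-end computation is needed;
\item on the left half, $g_n(j)=\sum_{i\ge j}q_i+O(\sqrt j/n)$.
\end{itemize}
For $g_n(j)\ge c/\sqrt j$, use the two-sided bound $\sum_{i\ge j}q_i\asymp j^{-1/2}$, which holds for every $j$. An error term $O(1/j)$ alone does not dominate $\sqrt{2/(\pi j)}$ for small $j$.
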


\begin{proof}
Let $q_i=e^{-i}i^{i-1}/i!$ for $i\ge1$ be the Borel
probabilities, so that $\sum_{i\ge1}q_i=1$
(see e.g.~\cite[Section~2.1]{DH17}).
The marginal formula of~\cite[Corollary~1]{DH17} gives, for $1 \leq i <n$,
$$
 \delta_n(i):=g_n(i)-g_n(i+1)=\frac{q_iq_{n+1-i}}{q_{n+1}}>0,
$$
as well as $g_n(1)=(n-1)/(n+1)$ and
$1+g_n(n)=(n/(n+1))^{n-1}=\delta_n(1)$.
These facts give the first bound in~\eqref{eq:g_bounds}.
Also, the symmetry $\delta_n(i)=\delta_n(n+1-i)$ for $2\le i<n$
gives, by telescoping, for $2 \le j \le n$,
\begin{equation}
\label{eq:marginal_reflection}
 g_n(j)+g_n(n+2-j)=g_n(2)+g_n(n)=-\frac2{n+1}.
\end{equation}

Stirling's formula~\cite{Rob55} gives, for $1\le i\le(n+1)/2$,
$$
 q_i=(2\pi)^{-1/2}i^{-3/2}+\mathcal O(i^{-5/2}),\qquad
 \delta_n(i)=q_i\bigl(1+\mathcal O(i/n)\bigr),
$$
where $ \mathcal{O}$ is uniform in
$n\ge2$ and $1\le i\le(n+1)/2$ as $n \rightarrow\infty$.
Thus, uniformly for $1\le j\le(n+2)/2$, telescoping from $g_n(1)$
and using $\sum_{i<j}iq_i\le C\sqrt j$ yields
$$
 g_n(j)=\sum_{i\ge j}q_i+\mathcal O(\sqrt j/n)
 =\sqrt{\frac2\pi}\,j^{-1/2}
       +\mathcal O\bigl(j^{-3/2}+\sqrt j/n\bigr).
$$
Together with~\eqref{eq:marginal_reflection}, this proves the
absolute bound in~\eqref{eq:g_bounds}.
Since $\sum_{i\ge j}q_i\asymp j^{-1/2}$, choosing $\varepsilon$
small enough gives its positive lower bound; $g_n(1)\ge1/3$
covers $\varepsilon n<1$.
Squaring the last display gives
$g_n(j)^2=2/(\pi j)+O(j^{-2}+n^{-1})$ on this half of the range.
Summing and using~\eqref{eq:marginal_reflection} gives
$\sum_{j=1}^n g_n(j)^2=(4/\pi)\log n+O(1)$.
Together with $g_n(1)\ge1/3$, this yields
$\sum_{j=1}^n g_n(j)^2\asymp\log n$ for all $n\ge2$.
Finally, the absolute bound in~\eqref{eq:g_bounds} gives the
cubic estimate.
\end{proof}

\subsection{Log-concavity and product moments}
\label{sec:logconcavity}

Let $f\in\mathbb R[z_1,\ldots,z_d]$ be a polynomial with
nonnegative coefficients. For a multi-index
$\alpha=(\alpha_1,\ldots,\alpha_d)\in\mathbb Z_{\ge0}^d$, write
$
 \partial^\alpha f
 =\partial_{z_1}^{\alpha_1}\cdots
  \partial_{z_d}^{\alpha_d}f$ and  $\partial^0 f=f$.
We say that $f$ is \emph{strongly log-concave} if, for every
$\alpha\in\mathbb Z_{\ge0}^d$, either $\partial^\alpha f\equiv0$
or the function
$
 z\longmapsto\log\bigl(\partial^\alpha f(z)\bigr)
$
is concave on $(0,\infty)^d$.
Since the coefficients of $f$ are nonnegative, every nonzero
derivative $\partial^\alpha f$ is strictly positive on
$(0,\infty)^d$, so its logarithm is well defined.

For homogeneous polynomials, this is equivalent to the Lorentzian
property~\cite[Theorem~2.30]{BH20}. Introduced by Brändén
and Huh \cite{BH20}, Lorentzian polynomials provide a robust framework for  negative-dependence phenomena.

For another log-concavity property of parking functions,
Pappe~\cite{Pap24} proved B\'ona's conjecture on the coefficient
sequence of the sum-of-entries enumerator, using matroid log-concavity.

\begin{lemma}
\label{lem:occupation_logconcavity}
For $n\ge2$, the probability generating polynomial
$G_n(z)=\E_n[\prod_{j=1}^n z_j^{X_j}]$ is strongly log-concave.
Moreover, for every $z\in(0,\infty)^n$ and $1\le k\le n$,
\begin{equation}
\label{eq:product_moments}
 \E_n\!\left[\prod_{r=1}^k z_{\pi_n(r)}\right]
 \le\left(\sum_{j=1}^n\mu_n(j)z_j\right)^k.
\end{equation}
\end{lemma}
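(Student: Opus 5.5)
Here is the plan. First establish strong log-concavity of $G_n$, then derive the product-moment bound from it.

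\emph{Step 1: strong log-concavity.} Write $(n+1)^{n-1}G_n(z)=\sum_{x\in\PF_n}\prod_j z_j^{X_j(x)}$. Grouping parking functions by occupation vector $m=(X_1,\dots,X_n)$ gives
$$(n+1)^{n-1}G_n(z)=\sum_{m\in S}\binom{n}{m}z^m=n!\sum_{m\in S}\frac{z^m}{m!},$$
where
$$S=\Bigl\{m\in\mathbb Z_{\ge0}^n:\ |m|=n,\ \textstyle\sum_{i\le j}m_i\ge j\ \text{for all } j\Bigr\}.$$
So $G_n$ is, up to a constant, the normalized generating polynomial $\sum_{m\in S}z^m/m!$ of a set of lattice points. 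The support theorem of Br\"and\'en--Huh~\cite{BH20} says that such a polynomial is Lorentzian exactly when $S$ is M-convex. Hence the main task is to show $S$ is M-convex.

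This is the step I expect to be the main obstacle. I would argue as follows. $S$ is the set of integer points of a generalized permutohedron: the polymatroid-type region cut out by $\sum_{i\le j}m_i\ge j$, $m\ge0$, $|m|=n$. These are lower bounds on the nested prefix sets $\{1,\dots,j\}$, and a laminar family of bounds always defines an integral M-convex set. To verify the exchange axiom directly, take $m,m'\in S$ and $i$ with $m_i>m'_i$. Choose $j$ with $m_j<m'_j$, as follows:
\begin{itemize}
\item if some such $j$ exists with $j<i$, take the largest one;
\item otherwise take the smallest $j>i$.
\end{itemize}
Then $m-e_i+e_j$ changes the prefix sums only on the interval between $i$ and $j$. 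If $j<i$, those prefix sums increase, so the constraints stay satisfied. If $j>i$, the prefix sums on $[i,j)$ drop by one. On that interval every index $l$ other than $i$ has $m_l\le m'_l$, and $m_i>m'_i$ contributes strict excess, so $\sum_{l\le s}m_l\ge\sum_{l\le s}m'_l+1\ge s+1$ there; the decrease is therefore allowed. (The first case needs the same care, and I will check it.) Once $G_n$ is Lorentzian and homogeneous, \cite[Theorem~2.30]{BH20} gives strong log-concavity.

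\emph{Step 2: the moment bound.} By exchangeability,
$$\E_n\Bigl[\prod_{r\le k}z_{\pi_n(r)}\Bigr]=\frac{(n-k)!}{n!}\,\E_n\bigl[\text{sum over ordered $k$-tuples of distinct indices}\bigr]=\frac{(n-k)!}{n!}\,(z\cdot\nabla)^kG_n(w)\big|_{w=1},$$
where in the directional derivative $z$ is held fixed and the derivatives are taken in $w$. Indeed, $\partial^\alpha G_n(1)$ with $|\alpha|=k$ is the expected falling-factorial product $\E_n[\prod_j (X_j)_{\alpha_j}]$, and summing these with multinomial weights counts ordered selections of $k$ distinct balls.

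Now use log-concavity along the line. The function $h(t)=G_n(\mathbf 1+tz)$, for $t\ge0$, is a univariate polynomial of degree $n$ with nonnegative coefficients, and $h(t)=\sum_k h^{(k)}(0)t^k/k!$. Strong log-concavity, or the Lorentzian property restricted to the positive direction $z$, implies that the normalized coefficients $a_k=h^{(k)}(0)(n-k)!/n!$ satisfy $a_k\le a_1^k$. The reason is that Lorentzian polynomials restricted to lines in the positive orthant yield ultra-log-concave coefficient sequences with $a_0=1$, so $a_k/a_{k-1}$ is nonincreasing and hence at most $a_1$. Finally, $a_1=h'(0)/n=\sum_j z_j\E_n[X_j]/n=\sum_j\mu_n(j)z_j$, which gives \eqref{eq:product_moments}.

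The delicate point in Step 2 is justifying that restriction preserves ultra-log-concavity. The clean route is: $h(t)$, homogenized as $y^nG_n((y\mathbf 1+tz)/y)$, is a Lorentzian bivariate polynomial by closure under nonnegative linear changes of variables~\cite{BH20}. Bivariate Lorentzian polynomials are exactly those whose normalized coefficient sequences are ultra-log-concave with no internal zeros.
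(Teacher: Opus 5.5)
Your proof is correct, but both steps take a different route from the paper's.

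\textbf{Strong log-concavity.} The paper never checks an exchange axiom. It observes that $S$ is exactly the support of $\prod_{i=1}^n(z_1+\cdots+z_i)$. That is a product of linear forms with nonnegative coefficients, hence Lorentzian, so $S$ is M-convex for free and the Br\"and\'en--Huh support theorem applies to $\sum_{m\in S}z^m/m!$.

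Your direct verification also works, after two small repairs. First, in the case $j>i$ the inequality you need is $m_l\ge m'_l$ (not $\le$) for every $l<j$ with $l\ne i$. This is what the case hypothesis and the minimality of $j$ give, and it is what yields $\sum_{l\le s}m_l\ge\sum_{l\le s}m'_l+1$. Second, you only prove the one-sided exchange $m-e_i+e_j\in S$, and your choice of $j$ need not give $m'-e_j+e_i\in S$. For instance, take $n=4$, $m=(1,1,2,0)$, $m'=(2,0,1,1)$, $i=3$: your rule picks $j=1$, and $m'-e_1+e_3=(1,0,2,1)\notin S$. So you should invoke the standard equivalence of the one-sided and symmetric exchange properties (Murota), or make your polymatroid/laminar remark precise.

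\textbf{Moment bound.} The paper passes to the occupation polynomial of the first $k$ coordinates, which stays Lorentzian by Lee--Seo's marginal lemma. It then applies their inequality (3.6) with $v_j=k\mu_n(j)$.

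You instead restrict $G_n$ to the cone spanned by $\mathbf 1$ and $z$. Exchangeability identifies the normalized coefficients of the bivariate Lorentzian polynomial $G_n(y\mathbf 1+tz)$ with $a_k=\E_n[\prod_{r\le k}z_{\pi_n(r)}]$. Ultra-log-concavity, together with $a_0=1$ and positivity, then gives $a_k/a_{k-1}\le a_1$.

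Your route uses only closure of Lorentzian polynomials under nonnegative linear substitutions and the bivariate characterization. It treats all $k$ at once from $G_n$ alone, and even shows that $k\mapsto\E_n[\prod_{r\le k}z_{\pi_n(r)}]$ is log-concave. The paper's route is shorter in context, since the Lorentzian property of the marginals is needed anyway in Lemma~\ref{lem:sample_entropy}.
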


\begin{proof}
The product
$
 \prod_{i=1}^n(z_1+\cdots+z_i)
$
is strongly log-concave by Br\"and\'en and Huh~\cite[Corollary~2.32]{BH20},
as a product of linear forms with nonnegative coefficients.
Its support consists exactly of the count vectors of parking functions:
choosing a term $z_{a_i}$ in each factor gives $a_i\le i$, so the
first $j$ choices are at most $j$; conversely, every admissible
multiset has a nondecreasing ordering with $a_i\le i$.
By~\cite[Theorem~3.10 (1) implies (4)]{BH20}, replacing the nonzero coefficient
of each monomial $z^x$ by $1/x!$ gives a strongly log-concave
polynomial.  But this is $G_n$ up to a positive
constant. Indeed, a count
vector $x$ of a parking function has $n!/x!$ orderings, so
\begin{equation}
\label{eq:occupation}
 \P_n(X=x)=\frac{n!}{(n+1)^{n-1}}\frac1{x!}.
\end{equation}

We then apply results of \cite{LS26}.
In the notation of~\cite[Section~3.2]{LS26}, take $A=[n]$
and $\Omega_m=[n]^m$ for every $m\ge1$.
The law of $\pi_n$ is exchangeable and we have just shown that its occupation polynomial
is Lorentzian. Applying~\cite[Lemma~3.4(1)]{LS26} repeatedly
therefore shows that the same holds for its marginal, namely that $
 \E_n\left[\prod_{r=1}^k z_{\pi_n(r)}\right]$ is Lorentzian.
 Now apply~\cite[Equation~(3.6)]{LS26} with $m=k$,
$\nu=P_{n,k}$ and $x_j=z_j$. In their notation, $v_j$ is the
expected number of occurrences of $j$ among the first $k$
coordinates, so exchangeability gives
$$
 v_j=\E_n[\#\{r\in[k]:\pi_n(r)=j\}]=k\mu_n(j).
$$
Consequently, for every $z\in(0,\infty)^n$,
$$
 \E_n\left[\prod_{r=1}^k z_{\pi_n(r)}\right]
 \le\left(\frac1k\sum_{j=1}^n k\mu_n(j)z_j\right)^k
 =\left(\sum_{j=1}^n\mu_n(j)z_j\right)^k,
$$
which is~\eqref{eq:product_moments}.
\end{proof}

\begin{remark}
Up to a positive constant, $G_n$ is the volume polynomial of
the Pitman--Stanley polytope~\cite[Theorems~9 and~11]{PS02}.
Its Lorentzian property therefore also follows
from~\cite[Theorem~4.1]{BH20}.
\end{remark}

\section{Approximation by independent marginals}
\label{sec:approximation}

Recall that $ \KL$ denotes relative entropy. 
\begin{lemma}
\label{lem:sample_entropy}
Let $(Z_1,\ldots,Z_N)$ be an exchangeable vector in $[d]^N$, with
$N\ge2$ and common marginal law $\mu$. Suppose that the generating
polynomial of its occupation vector is strongly log-concave. Then,
for $1\le k\le N$,
$$
 \KL\bigl(\mathcal L(Z_1,\ldots,Z_k)\Vert\mu^{\otimes k}\bigr)
 \le\frac{k(k-1)}{N(N-1)}
       \KL\bigl(\mathcal L(Z_1,\ldots,Z_N)\Vert\mu^{\otimes N}\bigr).
$$
\end{lemma}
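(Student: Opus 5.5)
We will prove the lemma by combining the exact chain-rule structure of relative entropy against a product measure with the strengthened Han inequality of Lee and Seo~\cite{LS26}, in its exchangeable form. Write $D_m=\KL(\mathcal L(Z_1,\ldots,Z_m)\Vert\mu^{\otimes m})$ for $0\le m\le N$. Then $D_0=D_1=0$, the latter because the marginal of $Z_1$ is exactly $\mu$. Since the reference measure is a product, the chain rule gives
$$D_m-D_{m-1}=\E\bigl[\KL(\mathcal L(Z_m\mid Z_1,\ldots,Z_{m-1})\Vert\mu)\bigr]=I(Z_m;Z_1,\ldots,Z_{m-1}),$$
using once more that $Z_m$ has law $\mu$. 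By exchangeability we may write $\Delta_m:=D_m-D_{m-1}=I(Z_m;Z_{<m})$. This increment is nondecreasing in $m$, since adding conditioning variables can only increase mutual information. Hence $m\mapsto D_m$ is convex, and this is the classical Han inequality $D_k/k\le D_N/N$. It only yields a linear factor $k/N$, which is not enough.

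The quadratic factor must come from negative dependence. The claim $D_k\le \frac{k(k-1)}{N(N-1)}D_N$ is equivalent to saying that $D_m/\binom m2$ is nondecreasing. A sufficient condition is that the increments satisfy $\Delta_m/(m-1)$ nondecreasing, i.e.
$$\frac{I(Z_m;Z_{<m})}{m-1}\le\frac{I(Z_{m+1};Z_{\le m})}{m}.$$
Indeed, $D_k=\sum_{m\le k}\Delta_m$, and a Cesàro-type averaging with weights $m-1$ then gives $D_k/\binom k2\le D_N/\binom N2$. This "second-order convexity" is precisely the content of the strengthened Han inequality of~\cite{LS26} for Lorentzian (strongly log-concave) exchangeable families. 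I would invoke their main entropy inequality in the setting of~\cite[Section~3.2]{LS26}, with $\Omega_m=[d]^m$. The hypothesis needed there is that every marginal $\mathcal L(Z_1,\ldots,Z_m)$ has a Lorentzian occupation polynomial. This follows, exactly as in the proof of Lemma~\ref{lem:occupation_logconcavity}, by applying~\cite[Lemma~3.4(1)]{LS26} repeatedly to pass from $N$ to $m$ coordinates.

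The steps are as follows. First, establish the chain-rule identity and the exchangeable rewriting of $\Delta_m$. Second, check that strong log-concavity is inherited by all $m$-marginals. Third, quote the Lee--Seo inequality in the form $\Delta_m/(m-1)\le\Delta_{m+1}/m$, or directly in the form $D_m/\binom m2\le D_{m+1}/\binom{m+1}2$. Fourth, telescope from $m=k$ to $m=N$. The case $k=1$ is trivial since $D_1=0$.

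The main obstacle is the third step: matching the precise statement of~\cite{LS26}. Their inequality may be phrased for the subsets $S\subset[N]$ of size $m$, with entropies rather than divergences. I would translate by noting that $D_m=m H(\mu)-H(Z_1,\ldots,Z_m)$, since $\E[-\log\mu^{\otimes m}(Z_{\le m})]=mH(\mu)$. Consequently, any inequality of the form "$(H(Z_{\le m})-mH(\mu))/\binom m2$ is monotone" for exchangeable Lorentzian laws is exactly the required statement. If their result instead only gives the one-step comparison between $m$ and $m+1$ at the level of negative correlation of the increments, I would fall back on proving $\Delta_{m+1}-\Delta_m\ge\Delta_m/(m-1)$. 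To do so, I would write both sides as conditional mutual informations and use the submodularity-type inequality for Lorentzian measures from~\cite{LS26}.
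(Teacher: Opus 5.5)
Your architecture matches the paper's. You inherit the Lorentzian property to each initial segment $(Z_1,\ldots,Z_m)$ via \cite[Lemma~3.4(1)]{LS26}, translate through $D_m=mH_1-H_m$ with $H_m=\mathsf H(Z_1,\ldots,Z_m)$, show that $D_m/\binom m2$ is nondecreasing, and use $D_1=0$ for $k=1$. The gap is the third step, which carries all the content and which you leave as a black box. Of the two forms you offer to ``quote'', only the second one (monotonicity of $D_m/\binom m2$) is available. The form you single out as ``precisely'' the Lee--Seo inequality, monotonicity of $\Delta_m/(m-1)$, is the wrong target, for three reasons. It is strictly stronger than what you need: the sequence $D_2,D_3,D_4=1,6,12$ has $D_m/\binom m2=1,2,2$, but $\Delta_m/(m-1)=1,\tfrac52,2$. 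It does not follow from the strengthened Han inequality. And your fallback, proving $\Delta_{m+1}-\Delta_m\ge\Delta_m/(m-1)$ from an unspecified ``submodularity-type inequality'', has no support. The chain-rule and mutual-information discussion is correct but plays no role.

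The missing ingredient is the concrete statement of the strengthened Han inequality \cite[Lemma~3.7]{LS26}, applied to the law of $(Z_1,\ldots,Z_m)$ for $2\le m\le N$:
\[
 \sum_{j=1}^m\mathsf H(Z_1,\ldots,Z_{j-1},Z_{j+1},\ldots,Z_m)\ge(m-2)H_m+\sum_{j=1}^m\mathsf H(Z_j).
\]
By exchangeability, each leave-one-out vector has the law of $(Z_1,\ldots,Z_{m-1})$ and each $Z_j$ has law $\mu$. So the inequality reads $mH_{m-1}\ge(m-2)H_m+mH_1$. Substituting $H_r=rH_1-D_r$ gives $mD_{m-1}\le(m-2)D_m$, i.e.\ $D_{m-1}/\binom{m-1}2\le D_m/\binom m2$ for $3\le m\le N$, and telescoping finishes the proof. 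In your increment notation this says only $\Delta_m/(m-1)\ge D_{m-1}/\binom{m-1}2$: each normalized increment dominates the weighted average of the earlier ones, which is exactly what your Ces\`aro step needs. Your guess that the inequality is phrased through entropies of subsets was right. What was missing is this leave-one-out form with the extra $\sum_j\mathsf H(Z_j)$ term, together with the two-line exchangeability reduction that turns it into the monotonicity you want.
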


\begin{proof}
We write $\mathsf H$ for Shannon entropy, which we recall to be defined by $ \mathsf{H}(P)= - \sum_xP(x)\log{P(x)}$.
For $1\le m\le N$, set
$$
 H_m \coloneqq\mathsf H(Z_1,\ldots,Z_m),\qquad
 D_m
 \coloneqq\KL\bigl(\mathcal L(Z_1,\ldots,Z_m)\Vert\mu^{\otimes m}\bigr).
$$
Since each $Z_i$ has law $\mu$, the definition of relative
entropy gives $D_{m}=mH_1-H_m$.

By repeated application of~\cite[Lemma~3.4(1)]{LS26},
with $A=[d]$ and $\Omega_r=[d]^r$ for every $r\ge1$,
the law of each initial segment $(Z_1,\ldots,Z_m)$
is exchangeable and has a Lorentzian occupation polynomial.
For $2\le m\le N$, the strengthened Han inequality
of~\cite[Lemma~3.7]{LS26}, applied to this law, gives
$$
 \sum_{j=1}^m
 \mathsf H(Z_1,\ldots,Z_{j-1},Z_{j+1},\ldots,Z_m)
 \ge (m-2)H_m+\sum_{j=1}^m\mathsf H(Z_j).
$$
By exchangeability, deleting any one coordinate leaves a vector
with the same law as $(Z_1,\ldots,Z_{m-1})$, and each $Z_j$
has law $\mu$. Thus every summand on the left is $H_{m-1}$,
while every summand in the last sum is $H_1$. Consequently, for every $2 \leq m \leq N$, $
 mH_{m-1}\ge(m-2)H_m+mH_1$.
Equivalently, $mD_{m-1}\le(m-2)D_m$.
Thus $D_m/[m(m-1)]$ is nondecreasing for $2\le m\le N$,
which proves the claim for $k\ge2$.
The case $k=1$ follows from $D_1=0$.
\end{proof}

\begin{proof}[Proof of Proposition~\ref{prop:decoupling}]
Fix $n\ge2$ and $1\le k\le n$.
Apply Lemma~\ref{lem:sample_entropy} with $N=d=n$,
$Z_i=\pi_n(i)$ for $1\le i\le n$, and $\mu=\mu_n$,
using Lemma~\ref{lem:occupation_logconcavity}. We obtain
$$
 \KL(P_{n,k}\Vert\mu_n^{\otimes k})
 \le\frac{k(k-1)}{n(n-1)}
       \KL(P_{n,n}\Vert\mu_n^{\otimes n}).
$$
 Since $\pi_n$ is uniform
on $\PF_n$, we have
$$
 \KL(P_{n,n}\Vert\mu_n^{\otimes n})
 =\log\frac{n^n}{(n+1)^{n-1}}-n\KL(\mu_n\Vert U_n)
 \le\log n.
$$
This proves the relative entropy bound. Pinsker's
inequality~\cite[Theorem~7.10]{PW25} gives the total variation bound.
\end{proof}

\section{Total variation estimates}
\label{sec:estimates}

\subsection{Product measures and uniform bounds}

\begin{lemma}
\label{lem:products}
For all $n\ge2$ and $1\le k\le n$,
$$
 \TV(\mu_n^{\otimes k},Q_{n,k})
 \asymp 1 \wedge \sqrt{\frac{k\log(k+1)}n}.
$$
\end{lemma}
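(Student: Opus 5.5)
The plan is to treat this as a purely one-dimensional question about the likelihood ratio $h=n\mu_n-1=g_n$ under $U_n$, and to feed the estimates of Lemma~\ref{lem:marginal} into the product-measure comparison of~\cite{AKVZ26}. I will assume that comparison has the following form. For laws $\mu\ll\nu$ with $h=d\mu/d\nu-1$ and $h\ge-1+c$, let $t_k$ be the $(1-1/k)$-quantile of $|h|$ under $\nu$, i.e.\ the smallest $t$ with $\nu(|h|>t)\le1/k$. Then
\begin{equation*}
 \TV(\mu^{\otimes k},\nu^{\otimes k})\asymp 1\wedge\Bigl(k\,\E_\nu\bigl[|h|\mathbf 1_{\{|h|>t_k\}}\bigr]+\sqrt{k\,\E_\nu\bigl[h^2\mathbf 1_{\{|h|\le t_k\}}\bigr]}\Bigr).
\end{equation*}
The heuristic behind it is the following. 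The $\nu$-mass of at most $1/k$ carrying the largest values of $|h|$ is visited $O(1)$ times among the $k$ samples, so it contributes through an $L^1$ (Poisson) term. The remaining bulk is visited many times, so its contribution to the log-likelihood is Gaussian and enters through a truncated $\chi^2$ term. This explains why the answer has $\log(k+1)$ and not the untruncated $\log n$. For $k=1$ it reduces to $\TV(\mu_n,U_n)=\frac1{2n}\sum_j|g_n(j)|\asymp n^{-1/2}$. If the cited statement instead truncates at a fixed level of $|h|$ or uses a Hellinger-type functional, I will first rewrite it in this quantile form. This is legitimate because $1+g_n\in[e^{-1},2]$ by~\eqref{eq:g_bounds}, so the ratio is bounded above and below.

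Next I compute both terms with $\nu=U_n$ and $h=g_n$. Since all atoms have mass $1/n$, the top $1/k$ quantile of $|g_n|$ corresponds to about $n/k$ indices. By~\eqref{eq:g_bounds} and the lower bound $g_n(j)\ge c/\sqrt j$ for $j\le\varepsilon n$, these indices lie within distance $\asymp n/k$ of the endpoints $1$ and $n$, up to constants. The reflection identity~\eqref{eq:marginal_reflection} controls the right end by the left end. Set $m=\lceil n/k\rceil$.

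For the upper bound, the $L^1$ term is at most
\begin{equation*}
 \frac kn\sum_{j\le Cm}\Bigl(\frac{C}{\sqrt j}+\frac{C}{\sqrt{n+1-j}}\Bigr)\le C\frac kn\sqrt{n/k}=C\sqrt{k/n}.
\end{equation*}
The truncated $\chi^2$ term is at most
\begin{equation*}
 \frac kn\sum_{cm\le j\le n/2}\frac Cj\le C\frac kn\bigl(1+\log k\bigr).
\end{equation*}
Together these give $\TV\le 1\wedge C\sqrt{k\log(k+1)/n}$.

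For the lower bound, assume first $k\le\varepsilon' n$, so that $m$ is well inside $[1,\varepsilon n]$. Then $g_n(j)\ge c/\sqrt j$ for $m\le j\le\varepsilon n$, so the truncated $\chi^2$ term is at least
\begin{equation*}
 c\frac kn\sum_{m\le j\le\varepsilon n}\frac1j\ge c\frac kn\log(\varepsilon k).
\end{equation*}
This handles large $k$. For bounded $k$, the $L^1$ term is at least $c\frac kn\sum_{j\le m}j^{-1/2}\ge c\sqrt{k/n}$, and this covers $\log(k+1)\asymp1$. For $k>\varepsilon'n$, monotonicity of $\TV$ in $k$ (marginalization) reduces to $k=\lfloor\varepsilon'n\rfloor$, where the bound is already of order $1$.

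The step I expect to be the main obstacle is the exact matching with the statement of~\cite{AKVZ26}: where precisely the truncation sits and which constants it allows. In particular, the quantile $t_k$ is defined through the \emph{values} of $|g_n|$, whereas my computation uses positions $j$. I will bridge the two using monotonicity of $g_n$ from Lemma~\ref{lem:marginal} together with the two-sided bounds $c/\sqrt j\le g_n(j)\le C/\sqrt j$ near the left end. The absolute bound near the right end, made usable through~\eqref{eq:marginal_reflection}, shows that the set $\{|g_n|>t\}$ is comparable to $\{j\le C/t^2\}\cup\{j\ge n+1-C/t^2\}$. This comparability is all that the computation above requires.
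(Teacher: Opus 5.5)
\paragraph{Verdict.} Your overall route is correct and is the paper's route, with one step asserted rather than proved. Like the paper, you apply the product-measure characterization of~\cite{AKVZ26}, use $1+g_n\in[e^{-1},2]$ to replace its midpoint measure and score by $1/n$ and $g_n$, and evaluate with the bounds of Lemma~\ref{lem:marginal} at index scale $n/k$. Your estimates of the two pieces are sound, and so is the reduction of $k>\varepsilon' n$ by marginalization.

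\paragraph{The unproved step.} The gap is the interface with the cited result. The paper uses it in scalar form: $\TV(\mu_n^{\otimes k},Q_{n,k})\asymp\min\{1,s\}$ whenever $k\sum_j M(j)\bigl(u(j)^2/s^2\wedge|u(j)|/s\bigr)\asymp1$. Here $M=(\mu_n+U_n)/2$ and $u=(\mu_n-U_n)/(\mu_n+U_n)$. Bounded likelihood ratio only lets you swap $(M,u)$ for $(U_n,g_n)$ at the cost of constants. It does not by itself turn this Orlicz-type equation into your quantile-truncated $L^1+\sqrt{\chi^2}$ form.

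\paragraph{How to close it.} The conversion holds for any real $X$; here $X=g_n(V)$ with $V\sim U_n$. Put $A(t)=k\,\E[|X|\mathbf 1_{\{|X|>t\}}]$ and $B(t)^2=k\,\E[X^2\mathbf 1_{\{|X|\le t\}}]$.
\begin{itemize}
\item Since $x^2\wedge|x|$ is at most $x^2$ and at most $|x|$, the scalar equation gives $A(s)+B(s)\le Cs$. It also gives $B(t)^2/s^2+A(t)/s\ge c$ for every $t$. Hence $\inf_t(A+B)\asymp s$.
\item Moreover $A(t_k)+B(t_k)\le 2\bigl(A(t)+B(t)\bigr)$ for every $t$. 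If $t\ge t_k$, Cauchy--Schwarz and $\P(|X|>t_k)\le 1/k$ give $A(t_k)-A(t)\le B(t)$. If $t<t_k$, then $\P(|X|>t')>1/k$ for $t'<t_k$ gives $A(t)\ge t_k$. Hence $B(t_k)^2-B(t)^2\le t_kA(t)\le A(t)^2$.
\end{itemize}
Together these give $A(t_k)+B(t_k)\asymp s$, which is exactly your form.

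\paragraph{Comparison.} Alternatively, you can bypass quantiles as the paper does: plug $s=\sqrt{k\log(k+1)/n}$ into the scalar equation and split the sum at $j=n/(k+1)$. That is a one-line check once $s$ is guessed. Its crossover $|u|\asymp s$ sits at $j\asymp n/(k\log(k+1))$ rather than at your $n/k$. Your version needs quantile estimates for $t_k$, but it separates explicitly the $O(n/k)$ extreme indices from the bulk.
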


\begin{proof}
Put $b_{n,k}=k\log(k+1)/n$. We apply the scalar characterization
of~\cite[Theorem~1.2 and Section~3.3]{AKVZ26} to $k$ factors,
with one-coordinate laws $\mu_n$ and $U_n$.
The midpoint measure and score in that characterization are
$$
 M(j)=\frac{\mu_n(j)+1/n}{2}=\frac{2+g_n(j)}{2n},
 \qquad
 u(j)=\frac{\mu_n(j)-1/n}{\mu_n(j)+1/n}
     =\frac{g_n(j)}{2+g_n(j)}.
$$
The characterization gives
$\TV(\mu_n^{\otimes k},Q_{n,k})\asymp\min\{1,s\}$
whenever $s>0$ satisfies
$$
 k\sum_{j=1}^n M(j)
  \left( \frac{u(j)^2}{s^2} \wedge \frac{|u(j)|}{s} \right)
 \asymp1.
$$
Since $e^{-1}\le1+g_n(j)\le2$, we have
$M(j)\asymp1/n$ and $|u(j)|\asymp|g_n(j)|$,
uniformly in $n$ and $j$.
Taking $s=\sqrt {b_{n,k}}$ therefore reduces the claim to
$$
 \frac{k}{n}\sum_{j=1}^n
   \frac{g_n(j)^2}{b_{n,k}} \wedge
              \frac{|g_n(j)|}{\sqrt {b_{n,k}}}
 \asymp\frac{k}{n}\sum_{j=1}^n
   \frac1{b_{n,k}j} \wedge \frac1{\sqrt{b_{n,k}j}}
 \asymp1.
$$
The first comparison follows from the upper bound at both endpoints
and the lower bound for $j\le1\vee\lfloor\varepsilon n\rfloor$
in Lemma~\ref{lem:marginal}:
the summand on the right decreases in $j$, so
$1\vee\lfloor\varepsilon n\rfloor\asymp n$
makes the sum over this range comparable to the full sum.
For the last comparison, split at $j=n/(k+1)$.
On the upper part, $b_{n,k}j\ge(\log2)/2$, so the contribution is
comparable to $(k/(nb_{n,k}))\sum_{n/(k+1)\le j\le n}j^{-1}\asymp1$.
The remaining terms contribute at most
$Ck\sqrt{n/(k+1)}/(n\sqrt {b_{n,k}})\le C/\sqrt{\log(k+1)}\le C$.
\end{proof}

\begin{proof}[Proof of Theorem~\ref{thm:main}]
Put $b_{n,k}=k\log(k+1)/n$.

\smallskip
\noindent\emph{Upper bound.}
For $b_{n,k}\ge1$, the upper bound follows from
$\TV(P_{n,k},Q_{n,k})\le1$.
For $0<b_{n,k}<1$, Proposition~\ref{prop:decoupling} gives
\begin{equation}
\label{eq:tv_decoupling}
 \TV(P_{n,k},\mu_n^{\otimes k})
 \le\frac{k}{n}\sqrt{\log n}
 \le Cb_{n,k}\sqrt{\log(e/b_{n,k})}.
\end{equation}
Indeed, $n=k\log(k+1)/b_{n,k}$ implies
$\log n\le2\log(k+1)+\log(1/b_{n,k})$, and
$\log(k+1)\ge\log2$.
Since $b_{n,k}\log(e/b_{n,k})\le1$,
Lemma~\ref{lem:products} and the triangle inequality yield
$
 \TV(P_{n,k},Q_{n,k})\le C\sqrt{b_{n,k}}$.
Together with $\TV(P_{n,k},Q_{n,k})\le1$, this proves
the upper bound in~\eqref{eq:main}.

\smallskip
\noindent\emph{Lower bound.}
Since $b_{n,k}\log(e/b_{n,k})\to0$ as $b_{n,k}\downarrow0$,
\eqref{eq:tv_decoupling}, Lemma~\ref{lem:products} and the
triangle inequality give, for some absolute $b_0\in(0,1)$, for every $0<b_{n,k}\le b_0$, $
 \TV(P_{n,k},Q_{n,k})\ge c\sqrt{b_{n,k}}
$.

We next obtain a bound valid for all $b_{n,k}$.
For $x\in[n]^k$, set $
 h(x)=\prod_{r=1}^k\sqrt{1+g_n(x_r)}$.
For every $x\in[n]^k$, the arithmetic--geometric mean
inequality, applied to $P_{n,k}(x)/h(x)$ and
$Q_{n,k}(x)h(x)$, gives
$$
 \min\{P_{n,k}(x),Q_{n,k}(x)\}
 \le\sqrt{P_{n,k}(x)Q_{n,k}(x)}
 \le\frac12\left(
       \frac{P_{n,k}(x)}{h(x)}+Q_{n,k}(x)h(x)\right).
$$
Summing over $x\in[n]^k$, we obtain
$$
 1-\TV(P_{n,k},Q_{n,k})
 =\sum_{x\in[n]^k}\min\{P_{n,k}(x),Q_{n,k}(x)\}
 \le\frac12\left(
       \E_{P_{n,k}}[h^{-1}]+\E_{Q_{n,k}}[h]\right)
 \le\left(\frac1n\sum_{j=1}^n\sqrt{1+g_n(j)}\right)^k,
$$
where the last inequality uses~\eqref{eq:product_moments}
with $z_j=(1+g_n(j))^{-1/2}$ and the product structure of
$Q_{n,k}=U_n^{\otimes k}$.
Since $\sum_jg_n(j)=0$ and $1+g_n(j)\le2$,
$$
 1-\frac1n\sum_{j=1}^n\sqrt{1+g_n(j)}
 =\frac1{2n}\sum_{j=1}^n
       \bigl(\sqrt{1+g_n(j)}-1\bigr)^2
 \ge\frac{\sum_{j=1}^n g_n(j)^2}{2n(1+\sqrt2)^2}.
$$
Consequently, \eqref{eq:g_moments} yields
\begin{equation}
\label{eq:separation}
 \TV(P_{n,k},Q_{n,k})
 \ge1-\exp\!\left(-c\frac{k\log n}{n}\right).
\end{equation}

Since $\log(k+1)\le2\log n$, this also gives
$\TV(P_{n,k},Q_{n,k})\ge1-e^{-cb_{n,k}}$.
For $b_{n,k}\ge b_0$, we therefore obtain
$$
 \TV(P_{n,k},Q_{n,k})
 \ge1-e^{-cb_{n,k}}
 \ge\frac{cb_{n,k}}{1+cb_{n,k}}
 \ge\sqrt{\frac{b_{n,k}}{C+b_{n,k}}},
$$
where the last inequality holds for
$C\ge2/c+1/(c^2b_0)$.
For $0<b_{n,k}\le b_0$, the same lower bound follows from
$\TV(P_{n,k},Q_{n,k})\ge c\sqrt{b_{n,k}}$, after increasing $C$.
\end{proof}

\subsection{The critical profile}

\begin{proof}[Proof of Theorem~\ref{thm:profile}]
Suppose $0\le\lambda<\infty$. By
Proposition~\ref{prop:decoupling}, since
$k_n\sqrt{\log n}/n\to0$, it suffices to determine
the limit for $\mu_n^{\otimes k_n}$. Under $Q_{n,k_n}=U_n^{\otimes k_n}$, the coordinate variables
$V_1,\ldots,V_{k_n}$ are independent and uniform on $[n]$.
Since $\mu_n(j)=(1+g_n(j))/n$, for every
$x=(x_1,\ldots,x_{k_n})\in[n]^{k_n}$ we have
$$
 \frac{\mathrm d\mu_n^{\otimes k_n}}{\mathrm dQ_{n,k_n}}(x)
 =\frac{\prod_{r=1}^{k_n}\mu_n(x_r)}{n^{-k_n}}
 =\prod_{r=1}^{k_n}(1+g_n(x_r)).
$$
All these factors are positive by~\eqref{eq:g_bounds}.
Taking logarithms, define the log likelihood ratio
$$
 L_{n,k_n}
 =\sum_{r=1}^{k_n}\log(1+g_n(V_r)).
$$
Thus, evaluated at the coordinate variables,
$
 \frac{\mathrm d\mu_n^{\otimes k_n}}{\mathrm dQ_{n,k_n}}
       (V_1,\ldots,V_{k_n})
 =e^{L_{n,k_n}}$. Since $e^{L_{n,k_n}}$ is a probability density with respect
to $Q_{n,k_n}$, we have $\E[e^{L_{n,k_n}}]=1$.
Thus the positive and negative parts of
$e^{L_{n,k_n}}-1$ have equal expectations, and
$$
 \TV(\mu_n^{\otimes k_n},Q_{n,k_n})
 =\frac12\E\bigl[|e^{L_{n,k_n}}-1|\bigr]
 =\E\bigl[(1-e^{L_{n,k_n}})_+\bigr].
$$

 To find the limit of this quantity as $ n \rightarrow\infty$, we proceed as follows. For fixed $u\in\mathbb R$, Taylor's formula, $\sum_jg_n(j)=0$
and~\eqref{eq:g_moments} give
$$
 \E[e^{iuL_{n,k_n}}]
 =\left(\frac1n\sum_{j=1}^n(1+g_n(j))^{iu}\right)^{k_{n}} =\left(1-\frac{2(u^2+iu)}{\pi}\frac{\log n}{n}+\mathcal{O}_{u}(n^{-1})\right)^{k_{n}}
  \quad \mathop{\longrightarrow}_{n \rightarrow \infty} \quad e^{-\sigma^2(u^2+iu)/2},
$$
with $\sigma^2=4\lambda/\pi$. Here the implicit constant in
$\mathcal O_u(n^{-1})$ may depend on $u$ but is independent
of $n$. For each fixed $u$, the cubic Taylor remainder is
uniform over the values of $g_n(j)$ because
$e^{-1}\le1+g_n(j)\le2$.
The limit follows from ${k_{n}}\log n/n\to\lambda$ and ${k_{n}}/n\to0$.
L\'evy's continuity theorem therefore shows that
$L_{n,k_n}$ converges in distribution to $\sigma G-\frac{\sigma^2}{2}$, with  $G \sim N(0,1)$. But the function $x\mapsto(1-e^x)_+$ is bounded and continuous, so
$$
 \TV(\mu_n^{\otimes {k_{n}}},Q_{n,{k_{n}}})
 =\E\bigl[(1-e^{L_{n,k_n}})_+\bigr]
  \quad \mathop{\longrightarrow}_{n \rightarrow \infty} \quad
 \E\bigl[(1-e^{\sigma G-\sigma^2/2})_+\bigr]
 =2\Phi(\sigma/2)-1
 =2\Phi\!\left(\sqrt{\lambda/\pi}\right)-1.
$$

The case $\lambda=\infty$ follows from~\eqref{eq:separation}. This completes the proof.
\end{proof}

\section{AI use}
\label{sec:AI}

\subsection{Development of the paper}
This work was developed with substantial assistance from ChatGPT 6 Pro. Its use included mathematical
exploration, the generation and revision of proof arguments,
literature searches, and LaTeX drafting. The author formulated
the questions and directed the successive revisions.

The following selected exchanges illustrate the main mathematical
developments. The durations are the approximate ``thinking times''
displayed by the interface for the selected responses.

\aiuse
  {(Initial prompt) ``Is it true that when $k_n=o(n)$, the total variation between
  the first $k_n$ positions of a uniform parking function of length
  $n$ and $k_n$ i.i.d.\ uniform random variables on
  $\{1,2,\ldots,n\}$ tends to $0$?''}
  {5 min}
  {The model answered in the negative and identified the scale
  $n/\log n$. At the author's request, it subsequently produced
  a fifteen-page draft, which was reduced to five pages through
  multiple successive revisions.}

\aiuse
  {``What is the most precise estimate you can get for
  $\TV(P_{n,k},Q_{n,k})$?''}
  {14 min}
  {The model proposed a lower bound at the scale of
  Theorem~\ref{thm:main}, together with several upper bounds
  that did not yet match it for slowly growing $k$.}

\aiuse
  {``Get a matching upper bound for slowly growing $k$.''}
  {8 min}
  {The model proposed matching bounds,
  $$
  \TV(P_{n,k},Q_{n,k})
  \asymp
  1 \wedge \sqrt{\frac{k\log(k+1)}{n}},
  $$
  using a negative-dependence theorem for lattice-path matroids
  as an additional ingredient. A subsequent request to incorporate
  these estimates produced a seven-page draft.}

\aiuse
  {``What is your assessment of this paper?''}
  {4 min}
  {The model suggested adding convergence of the total variation
  distance to one above the threshold, observing that the
  necessary estimates were already available.}

\bigskip

 This motivated the author to ask the following.

\aiuse
  {``When $k_n$ is asymptotic to $c n/\log(n)$, what is the limiting
  value of the total variation distance? Can you find it?''}
  {13 min}
  {The model proposed the explicit Gaussian profile in
  Theorem~\ref{thm:profile}. Its argument required a substantially
  stronger approximation by the product of the one-coordinate
  marginals. At the author's request, this extension was
  incorporated into a nine-page draft, which was then reduced and simplified through 
  multiple successive revisions.}

\bigskip

Between these milestones, the author requested repeated audits,
clarifications of individual proof steps, checks of the
applications of cited results, and simplifications. Further
requests for alternative arguments and directly applicable
results from the literature led to the present proof.
The first proof of the entropy approximation used Efron's
monotonicity theorem, Poisson thinning, spectral-gap estimates,
and Fisher information. In the final version, this argument
is replaced by strong log-concavity of the occupation generating
polynomial and the strengthened Han inequality.

The author checked all proofs and all applications of cited
results, edited the manuscript, and takes responsibility for
its mathematical content. The exchange involved approximately
150 user prompts and 40 versions of the manuscript. The author estimates the time personally
devoted to the project at roughly two full working days.

\subsection{Automated formalization in Lean}

The \href{https://github.com/igor-kortchemski/total-variation-threshold-parking-functions}{Lean 4 formalization} of this article was developed
with Codex GPT-6 Astra Ultra, using \href{https://github.com/igor-kortchemski/lean-autoformalization-skills}{skills} adapted
from Scott Armstrong's
\href{https://github.com/scottnarmstrong/LeanAutoformalizationSkills}{LeanAutoformalizationSkills}.
The work proceeded in two stages: an initial formalization of
Theorems~\ref{thm:main} and~\ref{thm:profile} and
Proposition~\ref{prop:decoupling}, followed by a reconstruction
of the manuscript's arguments and a review of their correspondence
with the formal proofs. The author estimates the time spent by GPT-6 on
these two stages at roughly three hours: one hour and forty-five
minutes for the first, and one hour and fifteen minutes for the second. The formalization proceeded automatically, with minimal human intervention.

The resulting development includes the supporting lemmas,
the introductory corollaries, and the separate remark on the
Pitman--Stanley volume polynomial. It follows the Borel-tail
argument for the marginal estimates, the log-concavity and
entropy argument for decoupling, the product comparison for the
uniform bounds, and the characteristic-function argument for
the critical profile. The external results used in the proofs
are proved in the forms needed here, sometimes by different internal
arguments. A correspondence report covers 121 mathematical units
of the manuscript; checks of the dependencies occurring in the
proof terms verify that the specified intermediate results are
actually used.

The proofs have been checked by Lean's kernel using only the standard
axioms of Lean and Mathlib. The three main results were additionally
verified with Comparator, NanoDa and con-ron.

\end{document}